\documentclass{amsart}
\usepackage{amsmath,amssymb,amscd,latexsym}

\usepackage[all]{xy}
\newcommand{\A}{{\mathbb{A}}}

\newcommand{\Q}{{\mathbb{Q}}}

\newcommand{\Z}{{\mathbb{Z}}}

\newcommand{\et}{\mathrm{\acute{e}t}}
\newcommand{\ret}{\mathrm{r\acute{e}t}}

\newcommand{\bpi}{\bar{\pi}}
\newcommand{\ok}{\bar{k}}

\newcommand{\Xok}{X_{\ok}}

\newcommand{\bi}{\bar{i}}
\newcommand{\bI}{\bar{I}}
\newcommand{\pro}{{\mathrm{pro}}}

\newcommand{\Spec}{\mathrm{Spec}\,}
\newcommand{\colim}{\operatorname*{colim}}

\newcommand{\holim}{\operatorname*{holim}}
\newcommand{\cosk}{\mathrm{cosk}}

\newcommand{\Gal}{\mathrm{Gal}}

\newcommand{\Hom}{\mathrm{Hom}}

\newcommand{\Map}{\mathrm{Map}}

\newcommand{\Ch}{{\mathcal C}}

\newcommand{\Gh}{{\mathcal G}}

\newcommand{\hHhp}{\hat{{\mathcal H}}_{\ast}}

\newcommand{\Sh}{{\mathcal S}}
\newcommand{\Shp}{{\mathcal S}_{\ast}}

\newcommand{\hSh}{\hat{\mathcal S}}

\newcommand{\hShpg}{\hat{\mathcal S}_{\ast G}}

\newcommand{\hShp}{\hat{\mathcal S}_{\ast}}

\newcommand{\Xh}{\mathcal{X}}

\newcommand{\bXh}{\bar{\Xh}}
\newcommand{\Xhpf}{\Xh_{\mathrm{pf}}}
\newcommand{\Yh}{\mathcal{Y}}

\newtheorem{theorem}{Theorem}[section]
\newtheorem{lemma}[theorem]{Lemma}
\newtheorem{prop}[theorem]{Proposition}
\newtheorem{defn}[theorem]{Definition}

\newtheorem{conjecture}[theorem]{Conjecture}
\theoremstyle{definition}

\newtheorem{example}[theorem]{Example}

\newtheorem{remark}[theorem]{Remark}

\begin{document}

\title{Existence of rational points as a homotopy limit problem}

\author{Gereon Quick}

\thanks{The author was supported in part by the German Research Foundation (DFG)-Fellowship QU 317/1}

\address{Mathematisches Institut, WWU M\"unster, Einsteinstr. 62, 48149 M\"unster, Germany}

\email{gquick@math.uni-muenster.de}

\date{}

\begin{abstract}
We show that the existence of rational points on smooth varieties over a field can be detected using homotopy fixed points of \'etale topological types under the Galois action. As our main example we show that the surjectivity statement in Grothendieck's Section Conjecture would follow from the surjectivity of the map from fixed points to continuous homotopy fixed points on the level of connected components. Along the way we define a new model for the continuous \'etale homotopy fixed point space of a smooth variety over a field under the Galois action. 
\end{abstract}

\maketitle

\section{Introduction}


Let $k$ be a field and $X$ a variety over $k$. To find all $k$-rational points of $X$ is an important  and often very difficult problem. Many techniques have been developed to either prove the existence or non-existence of rational points. Recently, several topological approaches have been established for example in \cite{ah}, \cite{pal1}, \cite{pal2}, \cite{kirsten2}. In particular, Harpaz-Schlank showed in \cite{hs} that certain obstructions to the existence of rational points can be formulated in terms of homotopy fixed points under the Galois action. 

In this paper we continue the independent approach in \cite{gspaces} and show that also the existence of rational points can be detected via continuous homotopy fixed points under the Galois action. As the main example and motivation for this approach we briefly recall Grothendieck's section conjecture which is one of the most important open problems on rational points. 

Let $\ok$ be an algebraic closure of $k$, $G:=\Gal(\ok/k)$ and $X$ be a geometrically connected variety over $k$ equipped with a geometric point $x$. Let $\Xok$ be the lift of $X$ to $\ok$. Taking \'etale fundamental groups $\pi_1(-,x)=\pi_1^{\et}(-,x)$ induces a short exact sequence of profinite groups 
\begin{equation}\label{sesintro}
1 \to \pi_1(\Xok, x) \to \pi_1(X, x) \to G \to 1.
\end{equation}
If $a\in X(k)$ is a $k$-rational point on $X$, then the functoriality of $\pi_1$ induces a continuous section $\sigma_a:G_k \to \pi_1(X, x)$ of \eqref{sesintro} which is well-defined up to conjugation by elements in $\pi_1(\Xok, x)$.  Grothendieck's Section Conjecture predicts that this map has an inverse in the following case (see also \cite{stixbook} for more details in this conjecture).

\begin{conjecture}\label{sconj} {\rm (Grothendieck \cite{grothendieck})}
Let $k$ be a field which is finitely generated over $\Q$ and let $X$ be a smooth, projective curve of genus at least two. The map $a \mapsto [\sigma_a]$ is a bijection between the set $X(k)$ of $k$-rational points of $X$ and the set $S(\pi_1(X/k))$ of $\pi_1(\Xok, x)$-conjugacy classes of continuous sections $G_k \to \pi_1(X, x)$.
\end{conjecture}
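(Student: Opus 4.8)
The plan is to recast the bijection as a statement about homotopy fixed points of an étale homotopy type, thereby reducing the difficult half of the conjecture to a homotopy limit problem; I would treat the two directions separately, since they are of very different character.

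First I would dispose of injectivity, which is the accessible direction. If $a,b\in X(k)$ give rise to $\pi_1(\Xok,x)$-conjugate sections $\sigma_a,\sigma_b$, then one uses that the image of a section is (a conjugate of) the decomposition group $D_a\subset\pi_1(X,x)$, so it suffices to observe that for a hyperbolic curve distinct rational points have non-conjugate decomposition groups. This is the part of the statement that is already known: the local structure of $\pi_1$ near a point together with $g\ge 2$ recovers the point from its decomposition group, and hence $a\mapsto[\sigma_a]$ is injective. No homotopy theory is required here.

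The substance is surjectivity, and here the strategy is to pass to étale homotopy types. Write $\het\Xok$ for the profinite étale homotopy type of $\Xok$; since $X$ is a smooth projective curve of genus $\ge 2$, the space $\Xok$ is (profinitely) a $K(\pi,1)$ with $\pi=\pi_1(\Xok,x)$, carrying a continuous action of $G=\Gal(\ok/k)$ that realizes the extension \eqref{sesintro}. I would then invoke the classical identification: for a group $G$ acting on a $K(\pi,1)$ through an extension $1\to\pi\to\Gamma\to G\to1$, the set of connected components of the homotopy fixed point space is naturally the set of $\pi$-conjugacy classes of sections of that extension. Applied here, this yields a natural bijection $\pi_0\big((\het\Xok)^{hG}\big)\cong S(\pi_1(X/k))$. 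A $k$-rational point $a$ produces, by functoriality of the étale homotopy type, a genuine $G$-fixed point of $\het\Xok$ and hence a point of $(\het\Xok)^{hG}$, and a diagram chase shows that the resulting class is exactly $[\sigma_a]$. Consequently the section map $X(k)\to S(\pi_1(X/k))$ is identified with the comparison map from genuine fixed points to $\pi_0$ of the continuous homotopy fixed points, and the surjectivity predicted by the conjecture becomes surjectivity of this comparison map on connected components.

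The hard part — and the reason this is a proposal rather than a proof — lives in two places. Technically, one must first fix a model of the \emph{continuous} homotopy fixed point space that correctly records the profinite topology of the Galois action, since the naive homotopy fixed points of the underlying $G$-object do not see continuity; constructing and analyzing such a model is precisely the task the paper undertakes. Conceptually, the genuinely open step is the surjectivity of $X(k)\to\pi_0\big((\het\Xok)^{hG}\big)$: a class in $\pi_0$ of the homotopy fixed points is a purely homotopical datum, a compatible system of descent data governed by continuous Galois cohomology, and promoting such a datum to an actual $\ok$-point defined over $k$ requires arithmetic input far beyond formal homotopy theory. Thus my argument would not settle the conjecture but would reformulate its surjectivity as the assertion that every continuous homotopy fixed point is realized by a rational point — the homotopy limit problem, which is the most I would expect to establish unconditionally.
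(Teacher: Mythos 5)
The statement you were asked about is Grothendieck's Section Conjecture itself, which the paper does not prove and treats as open; its actual result, Theorem \ref{mainthm1}, is precisely the conditional reduction you describe. Your proposal — injectivity as the known direction, surjectivity recast via the $K(\pi,1)$ property of $X$, the identification $\pi_0(\bXh^{hG})\cong S(\pi_1(X/k))$ coming from continuous sections of the fundamental exact sequence, and the observation that rational points give genuine fixed points so that surjectivity becomes the fixed-points-to-continuous-homotopy-fixed-points comparison on $\pi_0$ — is essentially the same approach as the paper's, and you correctly identify the remaining open step as the homotopy limit problem rather than claiming to resolve it.
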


It is well-known that the map $a \mapsto [\sigma_a]$ is injective. Hence the conjecture is a statement about the existence of rational points. The main result of this paper is that the surjectivity of the map $a \mapsto [\sigma_a]$ would follow from the solution of a homotopy limit problem in the spirit of the Sullivan Conjecture. \\

We now outline the main ideas of the paper. 
Let $k$ be an arbitrary field with algebraic closure $\ok$, $G:=\Gal(\ok/k)$ and $X$ be a quasi-projective geometrically connected smooth variety over $k$. 
A $k$-rational point $a: \Spec k \to X$ induces a map of \'etale homotopy types $(\Spec k)_{\et} \to X_{\et}$ which is a section of the map $X_{\et} \to (\Spec k)_{\et}$ induced by the structure morphism.  The pro-space $(\Spec k)_{\et}$ is homotopy equivalent to the classifying pro-space $BG$ of $G$. Hence we can consider $X_{\et}$ as an object over $BG$. Since spaces over $BG$ are equivalent, in a sense to be made precise later, to spaces with a $G$-action, we would like to form the homotopy fixed points $X_{\et}^{hG}$ of $X_{\et}$. The \'etale homotopy type functor then induces a natural map from the set $X(k)$ of $k$-rational points to the set of connected components $\pi_0(X_{\et}^{hG})$. The non-existence of homotopy fixed points of $X_{\et}$ would therefore be an obstruction to the existence of rational points. We will show that this idea can also be used to detect rational points.

In order to make this precise we have to specify a suitable model for the \'etale homotopy type. We will use the rigid \v{C}ech \'etale type over $k$ introduced by Friedlander in \cite{fried0}. We denote the resulting pro-space by $\Xh:=(X/k)_{\ret}$. It is weakly equivalent to the usual \'etale topological type of \cite{artinmazur} and \cite{fried}. The pro-space $(\Spec k/k)_{\ret}$ is isomorphic to the classifying pro-space $BG$. Since $G$ is a profinite group, we can consider $BG$ as a profinite space, i.e. an object in the category $\hSh$ of simplicial profinite sets. Moreover, since $X$ is smooth and connected, a result of Artin-Mazur \cite{artinmazur} shows that all its \'etale homotopy groups are profinite groups. This leads to the construction of a concrete fibrant profinite model $\Xhpf$ of the \'etale topological type of $X$ in the category of profinite spaces over $BG$. Via this model we define the continuous \'etale homotopy fixed points $\Xhpf^{hG}$ of $X$ over $k$. 

\begin{remark}
We pause for a moment for the following observation. 
The new model $\Xhpf^{hG}$ for the homotopy fixed point space of $X$ is one of the main technical ingredients of the paper and is a key improvement compared to previous approaches as in \cite{hs} where only a set of connected components of a potential $X^{hG}$ is defined; or as in \cite{gspaces} where a set-theoretic profinite completion process is applied. The new model is based on well-known constructions involving Eilenberg-MacLane spaces and Postnikov towers. In particular, we do not need to apply any kind of (Galois-equivariant) profinite completion functor.  
\end{remark}

We continue the outline of ideas. Taking the rigid \v{C}ech type of the base change $\Xok$ yields a pro-space which we denote by $\bXh$. A nice feature of the rigid \v{C}ech type over $k$ is that the $0$-simplices of $\bXh$ are given by the constant pro-set $X(\ok)$ of $\ok$-valued geometric points. 
Moreover, $\bXh$ inherits a natural action by the absolute Galois group $G$. (One should note that this action is only defined on the whole pro-space and not on each individual space.) 
This induces an action of $G$ on the limit of the underlying diagram of $\bXh$. We denote by $\bXh^G$ the $G$-fixed points of the limit of the inverse system underlying the pro-space $\bXh$.  
The set of $0$-simplices of $\bXh^G$ is then a subset of the set of rational points $X(k)$ of $X$. In particular, we obtain a surjective map of sets 
\[
X(k) \to \pi_0(\bXh^G)
\]
from $X(k)$ to the set of connected components of $\bXh^G$. 
Moreover, there is a canonical map of simplicial sets 
\[
\eta: \bXh^G \to \bXh^{hG}
\]
where we write $\bXh^{hG}$ for the continuous homotopy fixed point space 
\[
\bXh^{hG} := \Xhpf^{hG}
\] 
of $X$ over $k$. Overall we have the following diagram 
\begin{equation}\label{maindiagram}
\xymatrix{
X(k) \ar[rr] \ar[dr] & & \pi_0(\bXh^{hG}) \\
 & \pi_0(\bXh^G) \ar[ur]_{\pi_0(\eta)} & }
\end{equation}
of natural maps of sets. 
Hence if $\pi_0(\eta)$ is surjective, it would follow that each connected component of the homotopy fixed point space $\bXh^{hG}$ corresponds to a rational point of $X$. 

Let us return to the special case of a variety $X$ as in Conjecture \ref{sconj}. It is an example of a $K(\pi,1)$-variety, i.e. its \'etale topological type is weakly equivalent to an Eilenberg-MacLane space of the type $K(\pi,1)$. For such a variety, there is a natural bijection of sets 
\[
\pi_0(\bXh^{hG}) \cong S(\pi_1(X/k))
\]
where we recall that $S(\pi_1(X/k))$ denotes the set of conjugacy classes of continuous sections of \eqref{sesintro}. One should note that for this bijection it is crucial that we are able to define {\it continuous} homotopy fixed points. As a consequence of the previous discussion we can formulate our main result.

\begin{theorem}\label{mainthmintro}
Let $k$ and $X$ be as in Conjecture \ref{sconj}. Then the map $a \mapsto [\sigma_a]$ is surjective {\it if} the map of sets 
\[
\pi_0(\eta): \pi_0(\bXh^G) \to \pi_0(\bXh^{hG})
\]
is surjective. 
\end{theorem}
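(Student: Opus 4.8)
The plan is to deduce the statement formally from the commutative diagram \eqref{maindiagram}, using two ingredients established earlier: the diagonal map $X(k)\to\pi_0(\bXh^G)$ is surjective, and for a $K(\pi,1)$-variety there is a natural bijection $\pi_0(\bXh^{hG})\cong S(\pi_1(X/k))$. The one point requiring genuine verification is that, under this bijection, the top horizontal composite $X(k)\to\pi_0(\bXh^{hG})$ in \eqref{maindiagram} agrees with the classical section map $a\mapsto[\sigma_a]$.

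To see this, I would trace a rational point $a\in X(k)$ through the constructions. Such a point induces a splitting $(\Spec k)_{\ret}\to\Xh$ of the structure map, hence a $0$-simplex of $\bXh^G$ whose image under $\eta$ represents the homotopy-fixed-point class attached to $a$; this is precisely the top map in \eqref{maindiagram}. On \'etale fundamental groups the splitting recovers exactly the continuous section $\sigma_a$ of \eqref{sesintro}, well-defined up to $\pi_1(\Xok,x)$-conjugacy. Since $X$ is a $K(\pi,1)$, the profinite model $\Xhpf$ over $BG$ is controlled entirely by $\pi_1$, and unravelling the Postnikov/Eilenberg--MacLane description that produces the bijection $\pi_0(\bXh^{hG})\cong S(\pi_1(X/k))$ shows that it carries the homotopy-fixed-point class of $a$ to the conjugacy class $[\sigma_a]$. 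Thus the top composite in \eqref{maindiagram} is the section map.

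With this identification in hand the conclusion is immediate. If $\pi_0(\eta)$ is surjective, then the composite $\pi_0(\eta)\circ\big(X(k)\to\pi_0(\bXh^G)\big)$ is a composition of two surjections and hence surjective; by the previous step this composite is $a\mapsto[\sigma_a]$, which is therefore surjective, as claimed. The main obstacle is the compatibility in the second paragraph: one must match the purely homotopy-theoretic class that the \'etale-homotopy-type functor assigns to a rational point with the honest Galois-theoretic section $\sigma_a$ under the bijection $\pi_0(\bXh^{hG})\cong S(\pi_1(X/k))$. This is exactly where the $K(\pi,1)$-hypothesis and the explicit profinite model $\Xhpf$ are indispensable, since only then do $\pi_1$-level data determine the homotopy fixed points and make the comparison with continuous sections of \eqref{sesintro} tractable.
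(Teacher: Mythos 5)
Your argument follows the paper's own proof essentially verbatim: combine the surjectivity of $X(k)\to\pi_0(\bXh^G)$, the hypothesis on $\pi_0(\eta)$, and the bijection $\pi_0(\bXh^{hG})\cong S(\pi_1(X/k))$ coming from Proposition~\ref{profinsection} applied to the $K(\pi,1)$ model $\Xhpf\simeq B\pi_1(X)$, after checking that the composite $X(k)\to\pi_0(\bXh^{hG})$ agrees with $a\mapsto[\sigma_a]$. You are in fact a bit more explicit than the paper about that last compatibility, which the paper leaves implicit in the functoriality of $\pi_1$; otherwise the two arguments coincide.
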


The question whether the comparison map from fixed points to homotopy fixed points, such as $\eta$, is a weak equivalence is a special case of a homotopy limit problem (see \cite{holimlim}).  Unfortunately, to solve a homotopy limit problem is in general a very difficult task. For example, the comparison of fixed and homotopy fixed points under the action of finite $p$-groups was known as the Sullivan conjecture which has been proved in different variations in the famous works of Miller \cite{miller}, Carlsson \cite{carlsson} and Lannes \cite{lannes}. 
Nevertheless, we are optimistic that in special cases of arithmetic interest there will be enough information on the Galois action to deduce information about $\pi_0(\eta)$ in diagram \eqref{maindiagram}. \\


The content of the paper is organized as follows. In the second section, we provide a framework for  continuous homotopy fixed points of pro-spaces with an action by a profinite group. In the third section, we discuss rigid \v{C}ech types of algebraic varieties over a field and define a new profinite model for them. In the last section, we define Galois homotopy fixed points of varieties and construct the map $\eta$ which we need for diagram \eqref{maindiagram}. In the final paragraph we show Theorem \ref{mainthmintro}.

{\bf Acknowledgements}: I am grateful to Kirsten Wickelgren and Eric Friedlander for helpful discussions and comments.

\section{Models in profinite homotopy}

\subsection{Notations} 

Let $\Sh$ be the category of simplicial sets whose objects we also call spaces and let $\Shp$ be the category of pointed spaces. We denote by $\hSh$ the category of profinite spaces, i.e. simplicial objects in the category of profinite sets with continuous maps as morphisms. Let $\hShp$ be the associated category of pointed profinite spaces. We consider $\hSh$ and $\hShp$ with the simplicial model structures described in \cite{gspaces} and \cite{gspectra}. (The reader should note that Morel had already introduced the category $\hSh$ and equipped it with a $\Z/p$-model structure in \cite{ensprofin}). 

\begin{example}
Important examples of profinite spaces are classifying spaces for profinite groups. For a profinite group $G$, the simplicial set $BG$ given in degree $n$ by the product of $n$ copies of the profinite group $G$ is in a natural way an object of $\hShp$. Moreover, it comes equipped with the profinite space $EG$ over $BG$, given in degree $n$ by the product of $n+1$ copies of $G$ with a free $G$-action in each dimension. 
\end{example}

If $B$ is a pointed profinite space, we denote by $\hShp/B$ the category of pointed profinite spaces $X$ together with a map $X\to B$ in $\hShp$. This category of profinite spaces over $B$ inherits a model structure from $\hShp$ via the forgetful functor. (One should note that the terminology pointed (profinite) space over $B$ does not require a section $B\to X$ of the structure map $X\to B$.) 

If $X$ and $Y$ are objects in $\hShp/B$, we denote by $\Map_{\hShp/B}(X,Y)$ the simplicial set whose set of $n$-simplices is given as the set maps in $\hShp/B$
\[
\Map_{\hShp/B}(X,Y)_n=\Hom_{\hShp/B}(X \wedge \Delta[n]_+, Y)
\]
is given as the set maps in $\hShp/B$ where the standard simplicial $n$-simplex $\Delta[n]$ is equipped with a disjoint basepoint and the trivial map to $B$. This defines a functor 
\[
\Map_{\hShp/B}(-,-): (\hShp/B)^{\mathrm{op}}\times \hShp/B \to \Sh.
\]

\begin{remark}
Since the model structure on $\hShp/B$ is simplicial (see \cite{gspaces} and \cite[\S 2.2]{gspectra}), this functor is homotopy invariant in the following sense. Let $Z$ be an object in $\hShp/B$ and $f:X\to Y$ a map between fibrant objects in $\hShp/B$. If $f$ is a weak equivalence in $\hShp/B$, then the map $\Map_{\hShp/B}(Z,f)$ is a homotopy equivalence of fibrant simplicial sets. 
\end{remark}

\subsection{Finite models for spaces}

Our first step in the construction of Galois homotopy fixed point spaces is to show that a space with finite homotopy groups has a concrete model in the category $\hSh$.

\begin{defn}\label{pifinitedef}
A connected pointed simplicial set $X$ is called $\pi$-finite if all its homotopy groups are finite. 
\end{defn}

Starting with a $\pi$-finite space we will show below that it is homotopy equivalent to a profinite space of the following type.

\begin{defn}\label{fspace}
A pointed $f$-space $Z$ is a fibrant pointed profinite space such that each $Z_n$ is a finite set.
\end{defn}

Before we prove the main result of this section, we need the following notations. Let $X$ be a simplicial set. We denote by $\Pi_1X$ the fundamental groupoid of $X$. The higher homotopy groups define a module $\Pi_nX$ over $\Pi_1X$ defined by sending $x\in X_0$ to the $\Pi_1X(x,x)=\pi_1(X,x)$-module $\Pi_nX(x)=\pi_n(X,x)$. 
If $X$ is connected and equipped with a chosen basepoint $x\in X_0$, we choose for each vertex $y \in X_0$ a path $\gamma_y: y \to x$ in $\Pi_1X$ such that $\gamma_x$ is the identity. These paths induce isomorphisms of fundamental groups 
\[
\pi_1(X,y) =\Pi_1X(y,y) \cong_{\gamma} \Pi_1X(x,x)=\pi_1(X,x)=:\pi_1.
\]
Since the higher homotopy groups are abelian, we have canonical isomorphisms $\pi_n(X,y) = \pi_n(X,x)=: \pi_n$. The action of $\Pi_1X$ on $\Pi_nX$ is then determined by the structure of $\pi_n$ as a $\pi_1$-module. We now state the main theorem of this section. A different version of it has been shown in \cite{gspectra}. 

\begin{theorem}\label{finitemodel}
Let $\Gamma$ be a finite group and let $X$ be a connected pointed simplicial set over $B\Gamma$ which is $\pi$-finite and such that $\pi_1(X) \to \Gamma$ is surjective. Then there is a pointed $f$-space $FX$ over $B\Gamma$ which is a fibrant object in $\hShp/B\Gamma$ and a pointed map $\varphi:X \to FX$ over $B\Gamma$ which is a weak equivalence of underlying simplicial sets. In particular, it induces an isomorphism $\pi_*X \cong \pi_*FX$ of homotopy groups of the underlying simplicial sets.  
The assignment $X\mapsto FX$ is functorial for maps between connected simplicial sets over $B\Gamma$ which are $\pi$-finite. 
\end{theorem}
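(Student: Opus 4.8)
The plan is to realize the Moore--Postnikov tower of the structure map $X \to B\Gamma$ directly inside $\hShp$, without any profinite completion. The role of the hypothesis that $X$ is $\pi$-finite is precisely that every homotopy group entering the construction is a finite group, so that the Eilenberg--MacLane building blocks can be taken to be pointed $f$-spaces from the outset: for a finite $\pi_1 X$-module $A$, the standard bar model of $K(A,n)$ is levelwise finite, is Kan, and is simplicially trivial in degrees below $n$. Likewise $B\Gamma$ is itself a pointed $f$-space since $\Gamma$ is finite.

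First I would build the tower stage by stage as a sequence
\[
\cdots \to Y_n \to Y_{n-1} \to \cdots \to Y_1 \to Y_0 = B\Gamma
\]
of fibrations in $\hShp/B\Gamma$. The surjectivity of $\pi_1 X \to \Gamma$ is used to identify the bottom: writing $N = \ker(\pi_1 X \to \Gamma)$, the induced map $B\pi_1 X \to B\Gamma$ is (after fibrant replacement) a fibration with fiber $BN$, and I set $Y_1 = B\pi_1 X$, which is a pointed $f$-space because $\pi_1 X$ is finite. Inductively, given a levelwise finite fibrant $Y_{n-1}$ with $\pi_1 Y_{n-1} = \pi_1 X$, the next $k$-invariant is a map $Y_{n-1} \to K(\pi_n, n+1)$ into the twisted Eilenberg--MacLane space for the $\pi_1 X$-module $\pi_n := \pi_n X$; I realize it as an honest map of profinite spaces and form $Y_n$ as the pullback of the path fibration $P K(\pi_n, n+1) \to K(\pi_n, n+1)$ along it. The resulting $Y_n$ is again levelwise finite and fibrant, with fiber $K(\pi_n, n)$ over $Y_{n-1}$.

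Second I would set $FX := \lim_n Y_n$ in $\hShp/B\Gamma$. Because both $K(\pi_n, n+1)$ and its path space are simplicially trivial in degrees $< n$, the map $Y_n \to Y_{n-1}$ is an isomorphism on $m$-simplices for every $m < n$; hence the tower stabilizes degreewise and $FX_m = (Y_m)_m$ is a finite set, so $FX$ is a pointed $f$-space. As the inverse limit of a tower of fibrations between fibrant objects over the fibrant base $B\Gamma$, the structure map $FX \to B\Gamma$ is a fibration, so $FX$ is fibrant in $\hShp/B\Gamma$. The compatible lifts $X \to Y_n$ assemble into the desired map $\varphi : X \to FX$ over $B\Gamma$; since the $k$-invariants are chosen to reproduce those of $X$, the map $\varphi$ induces an isomorphism $\pi_\ast X \cong \pi_\ast FX$ stage by stage (five lemma, with vanishing $\lim^1$ because the groups are finite), and is therefore a weak equivalence of underlying simplicial sets.

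The hard part will be functoriality together with the equivariance bookkeeping. The Moore--Postnikov tower and, especially, its $k$-invariants are a priori only defined up to homotopy, so to make $X \mapsto FX$ a genuine functor on $\pi$-finite spaces over $B\Gamma$ I would have to replace the stagewise choices by a strictly functorial construction of the tower and of the realizations of the $k$-invariants as profinite maps, and then verify naturality in $X$. Keeping track of the $\pi_1 X$-module (equivalently, $\Gamma$-equivariant) structure on the higher homotopy groups through every stage, while simultaneously retaining levelwise finiteness, fibrancy over $B\Gamma$, and strict functoriality, is where the real work lies; this is exactly what a black-box completion functor would obscure, which is presumably why the variant proved in \cite{gspectra} has to be reworked here.
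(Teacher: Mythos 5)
Your construction is essentially the paper's: both realize the Postnikov tower of $X$ over $B\Gamma$ by strict pullbacks along $k$-invariants into twisted Eilenberg--MacLane spaces $E\pi_1\times_{\pi_1}K(\pi_n,n+1)$, which are levelwise finite precisely because $X$ is $\pi$-finite; both take $B\pi_1\to B\Gamma$ as the bottom stage, using surjectivity of $\pi_1\to\Gamma$ for fibrancy; both get levelwise finiteness of $FX=\lim_nX(n)$ from the fact that passing from stage $n-1$ to stage $n$ does not change the $m$-simplices for $m\le n-1$, and fibrancy over $B\Gamma$ from the limit of a tower of fibrations. (The paper uses $E\pi_1\times_{\pi_1}WK(\pi_n,n)\to E\pi_1\times_{\pi_1}K(\pi_n,n+1)$ rather than the path fibration --- an immaterial difference.) The one piece you explicitly leave open, strict functoriality, is exactly where the paper's choice of model does the work: it builds the tower not from abstract Postnikov sections but from the coskeleta $\cosk_nX$, which are strictly functorial in $X$, and extracts the $k$-invariant from the homotopy-pullback description of $\cosk_nX\to\cosk_{n-1}X$, so that the inputs to each pullback square are natural in $X$. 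You should note, however, that even the paper's inductive step invokes a factorization of $k_n$ through $j_{n-1}:\cosk_{n-1}X\to X(n-1)$ only ``up to homotopy,'' and then disposes of functoriality in one sentence; so the difficulty you identify is real, and the paper's treatment of it is terse rather than a fundamentally different idea. Your outline is correct in its main line, but as written it proves the existence and weak-equivalence clauses of the theorem while only describing, not carrying out, the proof of the functoriality clause.
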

\begin{proof}
After taking a fibrant replacement in $\Shp/B\Gamma$ we can assume that $X$ is a fibrant pointed simplicial set. 
For each $n\geq 1$, let $\pi_n:=\pi_nX$ be the $n$th homotopy group of $X$ which is by assumption a finite group. We construct the pointed profinite space $FX$ as the limit in $\hShp$ of a specific Postnikov tower of $X$
\[
\ldots \to  X(n) \to X(n-1) \to \ldots \to X(1).
\]

Let $\cosk_nX \in \Shp/B\Gamma$ be the $n$th coskeleton of $X$. It comes equipped with  natural maps $X\to \cosk_nX$ and $\cosk_{n}X\to \cosk_{n-1}X$ over $B\Gamma$ for each $n\geq 2$.  
The map $\cosk_nX \to \cosk_{n-1}X$ sits in a homotopy pullback square of space over $B\Gamma$
\[
\xymatrix{
\cosk_nX \ar[d] \ar[r] & E\pi_1 \times_{\pi_1} WK(\pi_n,n) \ar[d]^{q_n} \\
\cosk_{n-1}X \ar[r]_{k_n} & E\pi_1 \times_{\pi_1} K(\pi_n,n+1).}
\]
The map $q_n$ is induced by the universal bundle over the Eilenberg-MacLane space $K(\pi_n,n+1)$ which we consider as a simplicial finite group. For a simplicial group $\Gh$, the contractible space $W\Gh$ is defined by 
\[
(W\Gh)_n = \Gh_n \times \Gh_{n-1} \times \ldots \times \Gh_0.
\]
The map $k_n$ is the $k$-invariant defined by a class 
\[
[k_n]\in H^{n+1}_{\pi_1}(\cosk_{n-1}X; \pi_n)
\]
in the $\pi_1$-equivariant cohomology of $\cosk_{n-1}X$ (see also \cite[VI \S 5]{gj} and \cite[p. 207-208]{goerss}). 
It fits into a commutative diagram
\[
\xymatrix{
\cosk_{n}X \ar[d] \ar[r] & K(\pi_n,n+1) \ar[d] \\
\cosk_{n-1}X \ar[r ]\ar[ur]_{k_n} & B\pi_1.}
\]


Now we define profinite spaces $X(n)$ together with natural maps over $B\Gamma$ 
\[
j_n:\cosk_nX \to X(n)
\] 
which are weak equivalences of underlying simplicial sets. For $n=1$, we define 
\[
X(1):=B\pi_1 \to B\Gamma.
\]
Since $\pi_1 \to \Gamma$ is surjective, this is a fibration in $\hShp$.    
Choosing any map $\cosk_1X \to B\pi_1$ over $B\Gamma$ which is a weak equivalence of underlying simplicial sets provides a map $j_1: \cosk_1X \to X(1)$ over $B\Gamma$. 


For $n\geq 2$, assume we are given $X(n-1)$ and together with a pointed map $j_{n-1}:\cosk_{n-1} \to X(n-1)$ over $B\Gamma$. Up to homotopy, there is a factorization over $B\Gamma$ 
\[
\xymatrix{
\cosk_{n-1}X \ar[d]^{k_n} \ar[r]^{j_{n-1}} & X(n-1) \ar[dl] \\
E\pi_1 \times_{\pi_1} K(\pi_n,n+1). & }
\]

The space $X(n)$ and the map $X(n) \to X(n-1)$ is then defined as the pullback of the diagram 
\begin{equation}\label{GXnpb}
\xymatrix{
X(n) \ar[d] \ar[r] &  E\pi_1 \times_{\pi_1} WK(\pi_n,n) \ar[d]^{q_n} \\
X(n-1) \ar[r] & E\pi_1 \times_{\pi_1} K(\pi_n,n+1).}
\end{equation}

Since $\pi_1$ and $\pi_n$ are finite groups, the spaces $E\pi_1 \times_{\pi_1} K(\pi_n,n+1)$ and $E\pi_1 \times_{\pi_1} WK(\pi_n,n)$ are simplicial finite sets. Moreover, the map  
\[
q_n: E\pi_1 \times_{\pi_1} WK(\pi_n,n) \to E\pi_1 \times_{\pi_1} K(\pi_n,n+1)
\]
is a fibration in $\hShp$ by \cite[Theorem 2.9]{gspaces} (or \cite[Proposition 3.7]{completion}). 
Hence the pullback of (\ref{GXnpb}) can be constructed in $\hShp/B\Gamma$, $X(n)$ is a profinite space over $B\Gamma$ which is a fibrant object in $\hShp/B\Gamma$.  
Since the map 
\[
\cosk_nX \to E\pi_1 \times_{\pi_1} WK(\pi_n,n) \times_{E\pi_1 \times_{\pi_1} K(\pi_n,n+1)} \cosk_{n-1}X\] 
is a weak equivalence, we obtain an induced weak equivalence $j_n: \cosk_nX \to X(n)$ of underlying simplicial sets. 
Now we can define $FX$ to be the 
\[
FX:=\lim_n X(n).
\]

Since the set of $m$-simplices of $X(n)$ is isomorphic to the set of $m$-simplices of $X(n-1)$ for $m\leq n-1$, $FX$ is a simplicial object of finite sets. Moreover, $FX\to B\Gamma$ is a fibrant object in $\hShp/B\Gamma$, since it is the filtered inverse limit of a tower of fibrations in $\hShp/B\Gamma$. Furthermore, since the natural maps $X\to \lim_n \cosk_nX$ and $\lim_n \cosk_nX \to \lim_n X(n)$ are weak equivalences of underlying simplicial sets, the associated map $\varphi:X \to FX$ is a weak equivalence of underlying simplicial sets. In particular, it induces an isomorphism $\pi_*X \cong \pi_*FX$. 
The functoriality follows from the fact that all constructions used to define $FX$ are functorial.
\end{proof}
%



\begin{remark}\label{remarkkpi1profinite}
The construction of the functor $X\mapsto FX$ can be immediately generalized to profinite groups in the following special case. 
Let $G$ be a profinite group and let $(X,x)$ be a connected pointed simplicial set over $BG$ whose only nontrivial homotopy group is the profinite group $\pi_1(X,x)=:\pi$. Then the profinite classifying space $B\pi \in \hShp/BG$
is equipped with a pointed map $X \to B\pi$ over $BG$ which is a weak equivalence of underlying simplicial sets. We consider $B\pi$ as a profinite model for $X$ in $\hShp/BG$. 
\end{remark}

\subsection{Continuous homotopy fixed points}

Let $G$ be a profinite group. We fix a functorial fibrant replacement in $\hShp/BG$ and denote it by $X \mapsto RX$. 

\begin{defn}
For $X\in \hShp/BG$, we define the space $X^{hG}$ to be 
\[
X^{hG}:=\Map_{\hShp/BG}(BG, RX).
\]
We call $X^{hG}$ the homotopy fixed point space of $X$. 
\end{defn}

This notation and terminology is justified by the following observation. Let $\hShpg$ be the category of pointed profinite $G$-spaces, i.e. simplicial objects in the category of profinite sets with a continuous $G$-action together with a basepoint which is fixed under $G$. By taking homotopy orbits, we obtain a functor 
\[
\hShpg \to \hShp/BG, ~Y\mapsto (Y\times_G EG \to BG)
\]
from $\hShpg$ to the category of pointed profinite spaces over $BG$. This functor is right adjoint to the functor 
\[
\hShp/BG \to \hShpg, ~X\mapsto X\times_{BG}EG.
\]
Moreover, $Y\mapsto (Y\times_G EG \to BG)$ sends fibrant pointed profinite $G$-spaces to fibrations over $BG$. Let $Y \mapsto R_GY$ be a fixed fibrant replacement in $\hShpg$. Then, for a pointed profinite $G$-space $Y$, we have a natural isomorphism 
\[
\Map_{\hShpg}(EG, R_GY) \cong \Map_{\hShp/BG}(BG, R_GY \times_{G} EG).
\]
The mapping space on the left is the (continuous) homotopy fixed point space of the pointed profinite $G$-space $Y$ (see also \cite{gspaces} and \cite{homfixedlt}).

\begin{remark}
The crucial point in the construction of $X^{hG}$ is that we do take the topology of $G$ into account by considering continuous mapping spaces in $\hShp/BG$. Moreover, the functor $X\mapsto X^{hG}, \hShp/BG \to \Sh$ is homotopy invariant and does not depend on the choice of fibrant replacement in $\hShp/BG$. This follows from the fact that $BG$ is cofibrant in $\hShp/BG$ and that the model structure on $\hShp/BG$ is simplicial (see also \cite{gspaces} and \cite[\S 2.2]{gspectra}).
\end{remark}

\subsection{Homotopy fixed points and sections}

For our main arithmetic application, we need to relate homotopy fixed point spaces to the following set of sections. 
Let $\bpi$ be a profinite group and let
\begin{equation}\label{genses}
1 \to \bpi \to \pi \to G \to 1
\end{equation}
be a fixed extension of $G$ by $\bpi$. We denote the set of $\bpi$-conjugacy classes of continuous sections of (\ref{genses}) by $S(\pi)$. 

The homotopy fixed points of the classifying space $B\pi$ are related to the set $S(\pi)$ in the following way.
\begin{prop}\label{profinsection}
There is a natural bijection 
\[
\pi_0(\Map_{\hShp/BG}(B\pi,BG)) \cong S(\pi).
\]
\end{prop}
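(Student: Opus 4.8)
The plan is to identify the relevant mapping space with the derived space of sections of the fibration $Bp\colon B\pi\to BG$ induced by the quotient $p\colon\pi\to G$, and then to read off its set of path components via the classical dictionary between maps of classifying spaces and group homomorphisms, carried out continuously in $\hShp$. Concretely, by the definition of the homotopy fixed point space this mapping space is the homotopy fixed point space of $B\pi$, namely $\Map_{\hShp/BG}(BG,B\pi)$, so I would compute $\pi_0$ of the space of sections of $Bp$.

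First I would arrange $B\pi$ to be a fibrant object of $\hShp/BG$. Since $p$ is a surjection of profinite groups, the induced map $Bp\colon B\pi\to BG$ is a fibration in $\hShp$ by \cite[Theorem 2.9]{gspaces}, so $B\pi$ already serves as its own fibrant replacement over $BG$ and $\Map_{\hShp/BG}(BG,B\pi)$ is the honest simplicial space of sections of $Bp$. Because $BG$ is cofibrant, $B\pi$ is fibrant over $BG$, and the model structure on $\hShp/BG$ is simplicial, this mapping space is a Kan complex; hence its $\pi_0$ is the set of strict sections of $Bp$ modulo the relation of being connected by a $1$-simplex.

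Next I would compute the $0$- and $1$-simplices group-theoretically. The nerve functor is fully faithful, and this persists continuously: a continuous simplicial map $BG\to B\pi$ is determined by its effect on $1$-simplices $G\to\pi$, and compatibility with the face maps that encode the multiplication forces this to be a continuous group homomorphism; conversely every continuous homomorphism $s\colon G\to\pi$ yields $Bs$. The requirement that $Bs$ be a section over $BG$, i.e. $Bp\circ Bs=\id_{BG}$, translates to $p\circ s=\id_G$, so the $0$-simplices of the section space are exactly the continuous sections $s\colon G\to\pi$ of \eqref{genses}. A $1$-simplex between $Bs_0$ and $Bs_1$ is, again by full faithfulness applied to $BG\times\Delta[1]=B(G\times[1])$, a continuous natural transformation between the functors $s_0,s_1\colon G\to\pi$; for the one-object groupoids $G$ and $\pi$ this is a single element $g\in\pi$ with $s_1(x)=g\,s_0(x)\,g^{-1}$ for all $x\in G$. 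Imposing that the homotopy lie over $BG$ forces $Bp$ applied to this natural transformation to be the constant homotopy, which says precisely $p(g)=e$, i.e. $g\in\bpi$.

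Putting these together, two continuous sections lie in the same path component of the mapping space if and only if they are conjugate by an element of $\bpi$, so $\pi_0$ of the section space is exactly the set $S(\pi)$ of $\bpi$-conjugacy classes of continuous sections of \eqref{genses}; naturality of this bijection in morphisms of the extension \eqref{genses} is clear, since every construction above is functorial. The hard part will not be the group-theoretic bookkeeping but the verification that the classical nerve dictionary — full faithfulness of $B$ together with the identification of simplicial homotopies with natural transformations — goes through in the profinite category $\hShp$ with continuity preserved at every stage, and that the cited fibration statement genuinely lets us replace the derived mapping space by the strict space of sections, so that $\pi_0$ is computed by actual continuous group-theoretic sections rather than merely by homotopy sections.
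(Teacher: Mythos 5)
Your overall strategy coincides with the paper's: where the paper simply invokes ``the universal property of classifying spaces'' to identify homotopy classes of maps $BG\to B\pi$ over $BG$ with continuous outer sections, you substantiate that appeal by the explicit nerve dictionary. Your identification of the vertices of the section space with continuous homomorphic sections $s\colon G\to\pi$ is correct, as is the observation that $Bp\colon B\pi\to BG$ is already a fibration (so no fibrant replacement is needed) and that one should read the statement as $\Map_{\hShp/BG}(BG,B\pi)$ rather than with the arguments in the printed order.

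There is, however, one step where your computation is carried out for a different mapping space than the one defined in the paper, and it is precisely the step that produces the conjugacy classes. The paper sets $\Map_{\hShp/BG}(X,Y)_1=\Hom_{\hShp/BG}(X\wedge\Delta[1]_+,Y)$, so a $1$-simplex is a homotopy $BG\times\Delta[1]\to B\pi$ that in addition collapses $\{\ast\}\times\Delta[1]$ to the basepoint of $B\pi$. You instead work with $BG\times\Delta[1]=B(G\times[1])$ and impose only the condition of lying over $BG$, which yields $p(g)=e$, i.e.\ $g\in\bpi$. But the image of the nondegenerate $1$-simplex of $\{\ast\}\times\Delta[1]$ under the homotopy is exactly the component $g$ of the natural transformation, so the pointedness condition of the smash product would force $g=e$ and would identify $\pi_0$ with the set of sections on the nose rather than with $S(\pi)$. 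The resolution is that the mapping space relevant here is the \emph{unpointed} space of sections of $Bp$ --- it is the one corresponding to $\Map_{\hShpg}(EG,-)$ under the adjunction recalled in the paper, where $EG$ carries no $G$-fixed basepoint --- and for that space your identification of $1$-simplices with elements $g\in\bpi$ conjugating $s_0$ into $s_1$ is exactly right and gives $\pi_0\cong S(\pi)$. You should state explicitly which simplicial enrichment you are using and why the basepoint condition on the homotopies is to be dropped; as written, the $1$-simplex computation does not match the mapping space appearing in the statement.
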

\begin{proof}
The set of connected components of $\Map_{\hShp/BG}(BG, B\pi)$ is in bijection with the set of homotopy classes of maps  
\[
\Hom_{\hHhp/BG}(BG, B\pi).
\]
The universal property of classifying spaces implies that the latter set is in bijection with the set of continuous outer homomorphisms from $G$ to $\pi$ over $G$. The latter set is in bijection with $S(\pi)$. 
\end{proof}

\begin{remark}
One should note that, if the groups $\pi$ and $G$ are infinite profinite groups, it is crucial for the assertion in Proposition \ref{profinsection} that we use mapping spaces in $\hShp/BG$, since we are interested in the set of {\it continuous} sections of \eqref{genses}.
\end{remark}

\subsection{Profinite models for pro-spaces}\label{profinmodels}

Our next goal is to apply the constructions of the previous sections to pro-spaces. Since the \'etale topological type of a variety is given as a pro-object in the category of spaces,  we need this generalization for the arithmetic applications of the next section. 

For a category $\Ch$, let pro-$\Ch$ be the category of pro-objects of $\Ch$, i.e. the category of filtered diagrams in $\Ch$ with morphism sets defined by 
\[
\Hom_{\mathrm{pro-}\Ch}(\{X(i)\},\{Y(j)\}) := \lim_j \colim_i \Hom_{\Ch}(X(i),Y(j)).
\]

Let $\Xh=\{\Xh(i)\}_I$ and $\Yh=\{\Yh(j)\}_J$ be pro-objects of $\Ch$. Assume we have a functor $\alpha:J \to I$ between the indexing categories and a natural transformation $T: \Xh \circ \alpha \to \Yh$. This datum defines a morphism in pro-$\Ch$ 
\[
(T(j))_{j\in J} \in \lim_j \Hom_{\Ch}(\Xh(\alpha(j)), \Yh(j)) \subset \Hom_{\mathrm{pro-}\Ch}(\Xh,\Yh).
\]
Such a morphism of pro-objects is called a {\it strict} morphism. 

If $\Ch$ is a simplicial category, then the mapping space of two pro-objects is defined by
\[
\Map_{\mathrm{pro-}\Ch}(\{\Xh(i)\},\{\Yh(j)\}) := \lim_j \colim_i \Map_{\Ch}(\Xh(i),\Yh(j)).
\]

We are interested in the following special situation. 
Let $G=\lim_k G(k)$ be a profinite group given as the inverse limit of finite groups $G(k)$ indexed over the filtered category $K$.  Let $\{\Xh(i)\}_I$ be a pro-object in the category of pointed spaces $\Shp$. We assume that every $\Xh(i)$ is a pointed connected $\pi$-finite space in the sense of Definition \ref{pifinitedef}. Assume that we are given a strict morphism $\{\Xh_i\}_I \to \{BG(k)\}_K$ of pro-objects in $\hShp$. By definition of a strict morphism, this means that we have a functor $\alpha: K \to I$ and natural maps $\Xh(\alpha(k)) \to BG(k)$ in $\hShp$ for every $k \in K$. We assume that for every $k$, the induced homomorphism of fundamental groups $\pi_1(\Xh(\alpha(k)) \to G(k)$ is surjective. 
(For those $i \in I$ for which there might be no $k\in K$ with $\alpha(k)=i$, we consider $X(i)$ to be a pointed space over the trivial classifying space $B\{e\}=*$.)

Now we apply the functor $F: \Xh(i) \mapsto F\Xh(i)$ of Theorem \ref{finitemodel} to each $i\in I$. We obtain a pro-object $\{F\Xh(i)\}_I$ in the category of pointed $f$-spaces in the sense of Definition \ref{fspace} together with a strict morphism 
\[
\{F\Xh(i)\}_I \to \{BG(k)\}_K
\]
of pro-objects in $\hShp$. Since taking homotopy limits is functorial with respect to strict morphisms, we get an induced map in $\hShp$
\[
\varphi: \holim_i F\Xh(i) \to \holim_k BG(k)
\]
which, by abuse of notations, is also denoted by $\varphi$. Since filtered homotopy inverse limits preserve fibrations, $\varphi$ is a fibration in $\hShp$. 
(We refer the reader to \cite[\S 2.5]{gspectra} for homotopy limits in $\hShp$.)


\begin{lemma}\label{profinhomotopygroups}
For each $n\ge 0$, the homotopy group $\pi_n(\holim_i F\Xh(i))$ is naturally isomorphic in the category of profinite groups to the profinite group $\{\pi_n(\Xh(i))\}_I$. 
\end{lemma}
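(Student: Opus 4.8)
The plan is to compute the homotopy groups of $\holim_i F\Xh(i)$ by combining two facts: first, that each $F\Xh(i)$ has the same homotopy groups as $\Xh(i)$ (which is the content of Theorem \ref{finitemodel}, giving $\pi_*\Xh(i) \cong \pi_*F\Xh(i)$), and second, that a filtered homotopy inverse limit of $f$-spaces computes the limit of the homotopy groups, provided the relevant $\lim^1$-obstructions vanish. First I would recall that the $\holim$ in $\hShp$ is modeled as an inverse limit of a tower (or filtered diagram) of fibrations, so that the Bousfield-Kan spectral sequence, or more directly the Milnor-type short exact sequence, applies. Since each $F\Xh(i)$ is a fibrant object and the structure maps in the pro-system can be arranged to be fibrations (as noted in the excerpt, filtered homotopy inverse limits preserve fibrations), the homotopy groups of the homotopy limit fit into a short exact sequence involving $\lim_i$ and $\lim_i^1$ of the system $\{\pi_n(F\Xh(i))\}_I \cong \{\pi_n(\Xh(i))\}_I$.

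The key point I would exploit is that every $\pi_n(\Xh(i))$ is a \emph{finite} group, since by hypothesis each $\Xh(i)$ is $\pi$-finite. An inverse system of finite groups satisfies the Mittag-Leffler condition automatically, and therefore its $\lim^1$ vanishes. This is the crucial input that collapses the short exact sequence and identifies $\pi_n(\holim_i F\Xh(i))$ with the honest inverse limit $\lim_i \pi_n(F\Xh(i)) = \lim_i \pi_n(\Xh(i))$, which is by definition the underlying profinite group of the pro-system $\{\pi_n(\Xh(i))\}_I$. Here I would be careful to note that the limit is taken in the category of profinite groups, so that the topology is the inverse-limit topology, and the identification is as topological (profinite) groups, not merely as abstract groups; this is exactly what the statement asserts.

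Concretely, the steps in order are: (1) apply Theorem \ref{finitemodel} to replace $\pi_n(F\Xh(i))$ by $\pi_n(\Xh(i))$, noting these are finite; (2) present $\holim_i F\Xh(i)$ as an inverse limit of a filtered tower of fibrations of $f$-spaces, using that this is how homotopy limits are built in $\hShp$ (referencing \cite[\S 2.5]{gspectra}); (3) invoke the Milnor exact sequence / Bousfield-Kan obstruction computing $\pi_n$ of such an inverse limit in terms of $\lim_i$ and $\lim_i^1$ of the homotopy groups; (4) observe that the Mittag-Leffler condition holds because the system $\{\pi_n(\Xh(i))\}_I$ consists of finite groups, so $\lim_i^1 = 0$; (5) conclude that $\pi_n(\holim_i F\Xh(i)) \cong \lim_i \pi_n(\Xh(i))$, which is the profinite group $\{\pi_n(\Xh(i))\}_I$.

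The main obstacle I expect is a foundational bookkeeping issue rather than a hard conceptual one: making precise in which category the Milnor sequence or Bousfield-Kan spectral sequence lives for profinite spaces, and checking that the homotopy groups it produces carry the correct profinite topology so that the isomorphism is one of profinite groups. In particular, one must verify that the homotopy-limit construction in $\hShp$ is compatible with the profinite structure on homotopy groups coming from the model structure of \cite{gspaces}, and that $\lim^1$ in the relevant sense really does vanish for a filtered (not merely sequential) system of finite groups. The noncommutativity of $\pi_1$ also requires a small amount of care, since the $\lim^1$ of a system of finite (possibly nonabelian) groups is a pointed set which is trivial under Mittag-Leffler, so the argument still goes through; but this should be flagged explicitly rather than silently assumed.
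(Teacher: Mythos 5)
Your proposal is correct and follows essentially the same route as the paper: the paper likewise first invokes Theorem \ref{finitemodel} to identify $\pi_n(F\Xh(i))\cong\pi_n(\Xh(i))$, reduces to showing $\pi_n(\holim_i F\Xh(i))\cong\lim_i\pi_n(\Xh(i))$, and then appeals to the Bousfield--Kan spectral sequence for homotopy limits (citing \cite[Lemma 2.14]{gspectra}), whose degeneration rests on exactly the vanishing of derived limits for cofiltered systems of finite groups that you spell out via the Mittag--Leffler condition. Your explicit flagging of the filtered-versus-sequential issue and of nonabelian $\pi_1$ is more detail than the paper gives, but it is the same argument.
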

\begin{proof}
By Theorem \ref{finitemodel}, we have natural isomorphisms $\pi_n(\Xh(i))\cong \pi_n(F\Xh(i))$ for every $i \in I$. Since the category of profinite groups is canonically equivalent to the pro-category of finite groups, it suffices to show that the homotopy group $\pi_n(\holim_i F\Xh(i))$ is isomorphic to the profinite group $\lim_i \pi_n(\Xh(i))$. But this follows as in \cite[Lemma 2.14]{gspectra} from the Bousfield-Kan spectral sequence for homotopy limits. 
\end{proof}

The previous lemma justifies the following terminology.
\begin{defn}
We call $\Xhpf:=\holim_i F\Xh(i) \in \hShp$ together with the map $\varphi$ to $BG$ in $\hShp$ a profinite model over $BG$ of the pro-space $\Xh=\{\Xh(i)\}_I$. 

We define the continuous $G$-homotopy fixed points of $\Xh$ to be the space 
\[
\Xhpf^{hG} :=\Map_{\hShp/BG}(BG, \Xhpf).
\]
\end{defn}

\begin{remark}
The canonical map from limits to homotopy limits induces a natural map of underlying pointed spaces
\begin{equation}\label{limholim}
\lim_i \Xh(i) \to \holim_iF\Xh(i).
\end{equation}
\end{remark}

\begin{remark}\label{remarkpi1profinitemodel}

 
In the above situation, let us assume that $\{\Xh(i)\}_I$ be a pro-space such that each $\Xh(i)$ is a pointed connected $\pi$-finite space whose only nontrivial homotopy group is $\pi_1$. Then $F\Xh(i)$ is just given by $B\pi_1(\Xh(i))$. The limit $\lim_i B\pi_1(\Xh(i))$ is isomorphic to the simplicial profinite set $B(\lim_i\pi_1(\Xh(i)))$ which in degree $n$ is given by the $n$-fold product of copies of the profinite group $\lim_i\pi_1(\Xh(i))$. The canonical map
\[
\lim_i B(\pi_1(\Xh(i))) \to \holim_i B(\pi_1(\Xh(i)))
\]
is then a weak equivalence of pointed profinite spaces. Hence in this case, 
\[
B(\lim_i\pi_1(\Xh(i))) = \lim_i B(\pi_1(\Xh(i)))  \to \lim_k BG(k) = BG
\] 
would just as well serve as a profinite model of the pro-space $\{\Xh(i)\}_i$.
%
\end{remark}

\subsection{Group actions on pro-spaces}\label{proaction}

Finally, the Galois action on the \'etale topological type of a variety leads us to the following notion of a group action on a pro-space.  

Let $G$ be a profinite group and $\Xh=\{\Xh(i)\}_I$ be as above. Let $\Yh=\{\Yh(j)\}_J$ be another pro-object of $\Sh$. We assume that $G$ acts on $\Yh$ in the sense that every element $g\in G$ induces a strict automorphism of $\Yh$. 
Then the abstract group $G$ acts on the mapping space 
\[
\Map_{\mathrm{pro-}\Sh}(*,\{\Yh(j)\}) = \lim_j \Yh(j)
\] 
as well. 

Now let $f: \Yh\to \Xh$ be a strict morphism of pro-objects form $\Yh$ to a pro-object $\Xh$ over $BG$ which satisfies the hypotheses of the previous section \ref{profinmodels}. Then $f$ induces a natural map 
\[
\lim_j \Yh(j) \to \lim_i \Xh(i).
\] 
In particular, we have a map 
\[
(\lim_j\Yh(j))^G \to \lim_i \Xh(i)
\]
from the $G$-fixed points in $\lim_j\Yh(j)$. After taking a profinite model for $\{\Xh(i)\}_I$ as above, we obtain via \eqref{limholim} a natural map of spaces 
\[
\eta: (\lim_j\Yh(j))^G \to \lim_i \Xh(i) \to \holim_i F\Xh(i) \to \Map_{\hShp/BG}(BG, \Xhpf) = \Xhpf^{hG}.
\]

\begin{remark}
In the case that $\Xh$ has the homotopy type of the homotopy orbit space of $\Yh$ under its $G$-action, we may consider $\Xhpf^{hG}$ as the continuous homotopy fixed points of $\Yh$ and also write 
\[
\Yh^{hG}:= \Xhpf^{hG}
\] 
for this space. Moreover, we then consider $\eta$ as a map from fixed points to the homotopy fixed points of $\Yh$ under $G$. The main example for this situation is the Galois action on the \'etale topological type of a smooth variety over a field. 
\end{remark}

\section{\'Etale topological types}

We will now turn to the cases of arithmetic geometric origin in which we apply the ideas of the previous sections. The first step is to choose a specific model for the \'etale topological type of a variety. Instead of using the \'etale type of schemes defined by Friedlander in \cite{fried}, we consider the rigid \v{C}ech \'etale topological type over a field. It has been first defined and applied by Friedlander in \cite{fried0}.

\subsection{Rigid \v{C}ech types over a field}

We briefly recall the definition of the rigid \v{C}ech type of a variety over a field from \cite[\S 3]{fried0}. We start with the notion of a rigid covering. 
Let $k$ be a field, $\ok$ an algebraic closure of $k$ and let $X$ be a scheme of finite type over $k$. We denote by $X(\ok)$ the set of geometric points of $X$ with values in $\ok$ covering the structure morphism $p:X \to \Spec k$. 
A rigid covering $\alpha: U\to X$ of $X$ over $k$ is a disjoint union of pointed, \'etale, separated maps
\[
\coprod_{x\in X(\ok)} (\alpha_x: U_x, u_x \to X,x)
\]
where each $U_x$ is connected and $u_x$ is a geometric point of $U_x$ such that $\alpha_x \circ u_x =x$. If $Y$ is another scheme of finite type over $k$ and $f:X\to Y$ is a morphism of schemes, then a morphism of rigid coverings $\phi:(\alpha:U\to X) \to (\beta:V\to Y)$ over $f$ is a morphism of schemes $\phi:U\to V$ over $f$ such that $\phi\circ u_x=v_{f(x)}$ for all $x\in X(\ok)$.

If $\alpha:U\to X$ and $\beta: V\to Y$ are rigid coverings of $X$ and $Y$ over $k$, then the rigid product $U \stackrel{R}{\times}_k V \to X\times_k Y$ is defined to be the closed and open immersion of $U\times_k V\to X\times_k Y$ given as the disjoint union indexed by geometric points $x\times y$ of $X\times_k Y$ of 
\[
\alpha_x \times \beta_x: (U_x\times_k V_y)_0 \to X \times_k Y
\]
where $(U_x \times_k V_y)_0$ is the connected component of $U_x \times_k V_y$ containing the distinguished geometric point $u_x\times v_y$.

If $f:X\to Y$ is a map of schemes and $V\to Y$ a rigid covering of $Y$, then the pullback $f^*(V\to Y)=U\to X$ is the disjoint union of pointed maps 
\[
(V_{f(x)}\times_Y X)_x \to X
\]
where $(V_{f(x)}\times_Y X)_x$ is the connected component of $V_{f(x)}\times_Y X$ containing the geometric point $f(x)\times x$.

The category of rigid coverings of $X$ over $k$ is denoted by $RC(X/k)$. 
The fact that each connected component $U_x$ is equipped with a geometric point implies that there is at most one map between any two objects of $RC(X/k)$. For, a map of connected, separated \'etale schemes over $X$ is determined by the image of any geometric point (see \cite[Proposition 4.1]{fried}). Together with the construction of rigid products this shows that $RC(X/k)$ is essentially a directed set.

For a rigid covering $U\to X$, we denote by $N_X(U)=\cosk_0^X(U)$ its \v{C}ech nerve, i.e. the simplicial scheme given in degree $n$ by the $(n+1)$-fold fiber product of $U$ with itself over $X$. Since $X$ is locally noetherian, the connected component functor $\pi$ is well-defined. In \cite[\S 3]{fried0}, Friedlander defines the rigid \v{C}ech \'etale topological type of $X$ over $k$ to be the pro-simplicial set 
\[
(X/k)_{\ret}:RC(X/k) \to \Sh
\]
given by sending $U\to X$ in $RC(X/k)$ to the simplicial set $\pi(N_X(U))$ of connected components  of the \v{C}ech nerve. 
For a map $f:X\to Y$ of schemes of finite type over $k$, there is a strict morphism
\[
f_{\ret}: (X/k)_{\ret} \to (Y/k)_{\ret}
\]
of pro-simplicial sets induced by the pullback functor $f^*:RC(Y/k) \to RC(X/k)$. This makes the assignment 
\[
X \mapsto (X/k)_{\ret}
\] 
into a functor from the category of schemes of finite type over $k$ to the category of pro-simplicial sets.

The following proposition shows that if $X$ is quasi-projective, then $(X/k)_{\ret}$ has the same homotopy type as the usual \'etale topological type. The proof follows from a combination of Friedlander's arguments in \cite[Proposition 3.2 and a remark on page 102]{fried0}, and \cite[Proposition 8.2]{fried}.

\begin{prop}\label{whe}
Let $X$ be a quasi-projective scheme of finite type over a field $k$. Then there is a zig-zag of canonical weak equivalences in $\pro-\Sh$ between $(X/k)_{\ret}$ and the \'etale topological type $X_{\et}$ of \cite[\S 4]{fried}. 
\end{prop}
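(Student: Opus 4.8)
The plan is to exhibit the asserted weak equivalence as a canonical comparison map between two pro-objects of the \emph{same} formal shape, and to reduce its verification to a cohomological statement where the quasi-projectivity hypothesis can be brought to bear. Both $(X/k)_{\ret}$ and $X_{\et}$ send a covering-datum to $\pi(-)$ of an associated simplicial scheme: the former is indexed by the rigid coverings $RC(X/k)$ and sends $U\to X$ to $\pi(\cosk_0^X(U))$, using only $0$-coskeletal hypercoverings, while the latter is indexed by rigid hypercoverings and sends $U_\bullet$ to $\pi(U_\bullet)$. Since a \v{C}ech nerve is in particular a hypercovering, the \v{C}ech-nerve construction defines a functor from $RC(X/k)$ into the rigid-hypercovering category that is compatible with $\pi(-)$; restricting the \'etale type along this functor yields a canonical strict morphism $X_{\et}\to (X/k)_{\ret}$ in $\proS$. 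It suffices to prove that this map, together with the bookkeeping comparing the two rigid indexing schemes, is a weak equivalence.

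My approach to the substantive map is the pro-homotopy Whitehead criterion of Artin--Mazur: a morphism of connected pointed pro-spaces is a weak equivalence once it induces pro-isomorphisms on $\pi_0$, on $\pi_1^{\et}$, and on cohomology with every finite local coefficient system, because one may then climb a pro-Postnikov tower and inductively kill the relative obstruction classes, which live in exactly such cohomology groups. The maps on $\pi_0$ and on $\pi_1^{\et}$ are already isomorphisms at the level of \v{C}ech nerves, since both invariants are determined by the category of finite \'etale coverings of $X$, which the rigid coverings exhaust; this disposes of the low-degree invariants without any hypothesis on $X$.

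The entire content therefore concentrates in the cohomological comparison: one must show that the colimit of \v{C}ech cohomology over the rigid coverings computes \'etale cohomology, $\check{H}^*_{\et}(X;\mathcal F)\silo H^*_{\et}(X;\mathcal F)$, for all finite local systems $\mathcal F$. This is the hard part, and it is precisely where quasi-projectivity over a field is indispensable: for a general scheme \v{C}ech cohomology disagrees with derived-functor cohomology, and the standard remedy --- Verdier's passage to hypercoverings --- is exactly what the restriction along the \v{C}ech-nerve functor reverses, so some genuine geometric input is unavoidable. I would supply it by \cite[Prop 8.2]{fried}, which establishes the required comparison (equivalently, the homotopy-cofinality of \v{C}ech nerves among hypercoverings in the pro-homotopy category) under this hypothesis.

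It remains to reconcile the rigid-\emph{covering} indexing of \cite{fried0} with the rigid-\emph{hypercovering} indexing of \cite{fried}; here the base points $u_x$ serve only to strictify morphisms and make the index category directed, and the resulting comparison is a weak equivalence by \cite[Prop 3.2]{fried0} together with the remark on p.~102 of \cite{fried0}. Splicing this with the comparison of the previous paragraphs produces the asserted zig-zag of canonical weak equivalences in $\proS$. The only genuine obstacle along the way is the \v{C}ech-versus-hypercover cohomology isomorphism, which I have isolated as the step resting on \cite[Prop 8.2]{fried} and on the quasi-projectivity of $X$; everything else is either formal or a matter of the rigidifying bookkeeping.
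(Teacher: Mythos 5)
Your proposal is correct and follows essentially the same route as the paper: the paper's proof consists precisely of citing \cite[Proposition 3.2 and the remark on p.~102]{fried0} together with \cite[Proposition 8.2]{fried}, which are exactly the two inputs you isolate (the rigidification bookkeeping and the \v{C}ech-versus-hypercovering comparison, the latter being where quasi-projectivity enters). Your expansion via the restriction along the \v{C}ech-nerve functor and the Artin--Mazur Whitehead criterion is a faithful unpacking of what those citations accomplish, not a different argument.
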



\begin{remark}\label{adv} 
The set of $0$-simplices of $\pi(N_X(U))$ for any rigid cover $U\to X$ in $RC(X/k)$ is the set $X(\ok)$ of geometric points with values in $\ok$. Hence the pro-set of vertices of $(X/k)_{\ret}$ is just the constant functor sending each rigid covering $U=\coprod_{x\in X(\ok)}U_x \to X$ to $X(\ok)$.
This makes $(X/k)_{\ret}$ a very convenient object for our purposes.  
\end{remark}


\begin{lemma}\label{BGk}
Let $k$ be a field with absolute Galois group $G$. The rigid \'etale \v{C}ech type of $k$ is isomorphic in $\pro-\Sh$ to the pro-classifying space $BG$, i.e. there is an isomorphism 
\[
(\Spec k/k)_{\ret} \cong BG.
\]
\end{lemma}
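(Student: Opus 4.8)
The plan is to make the category $RC(\Spec k/k)$ completely explicit, to observe that the finite Galois subextensions form a cofinal subsystem, and to identify the rigid \v{C}ech type levelwise over this subsystem with the pro-classifying space $\{B\Gal(L/k)\}_L = BG$.

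First I would analyze the objects of $RC(\Spec k/k)$. Since the structure morphism $p\colon \Spec k \to \Spec k$ is the identity, the set $(\Spec k)(\ok)$ of geometric points covering $p$ has a single element, namely the fixed embedding $k\hookrightarrow\ok$. A rigid covering of $\Spec k$ is therefore a single connected, \'etale, separated $k$-scheme $U$ together with a geometric point $u\colon\Spec\ok\to U$ lifting $k\hookrightarrow\ok$; such a $U$ is $\Spec L$ for a finite separable extension $L/k$, and $u$ is a $k$-embedding $L\hookrightarrow\ok$. As $RC(\Spec k/k)$ has at most one morphism between two objects and these morphisms respect the geometric points, it is equivalent to the directed set of finite separable subextensions $k\subseteq L\subseteq\ok$, ordered by inclusion, with a morphism $\Spec M\to\Spec L$ whenever $L\subseteq M$. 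Each such $L$ is contained in its Galois closure inside $\ok$, so the finite Galois subextensions are cofinal; restricting to them yields an isomorphic pro-object whose transition maps realize the restriction surjections $\Gal(M/k)\twoheadrightarrow\Gal(L/k)$ for $L\subseteq M$, with $G=\lim_L\Gal(L/k)$.

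Next I would identify, for a finite Galois extension $L/k$ with $\Gamma:=\Gal(L/k)$, the simplicial set $\pi(\cosk_0^{\Spec k}(\Spec L))$ with $B\Gamma$. In degree $n$ this \v{C}ech nerve is $\Spec(L^{\otimes_k(n+1)})$, and the Galois descent isomorphism $L\otimes_k L\xrightarrow{\sim}\prod_{\Gamma}L$ iterates to $L^{\otimes_k(n+1)}\cong\prod_{\Gamma^n}L$, a finite \'etale $k$-algebra with $|\Gamma|^n$ field factors. The geometric points of this scheme are tuples $(\tau_0,\dots,\tau_n)$ of $k$-embeddings $L\hookrightarrow\ok$, which the fixed embedding identifies with $\Gamma^{n+1}$; the group $G$ acts by post-composition through $\Gamma$, i.e. by diagonal left translation, so the connected components are the orbits $\Gamma\backslash\Gamma^{n+1}$. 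Passing to the difference coordinates $(\gamma_0,\dots,\gamma_n)\mapsto(\gamma_0^{-1}\gamma_1,\dots,\gamma_{n-1}^{-1}\gamma_n)$ gives a bijection $\Gamma\backslash\Gamma^{n+1}\cong\Gamma^n$, under which the face maps (omitting a factor) and degeneracies (repeating a factor) of the \v{C}ech nerve become exactly those of the bar construction. This is the classical statement that applying $\pi$ to the \v{C}ech nerve of the $\Gamma$-Galois cover $\Spec L\to\Spec k$ returns the nerve $B\Gamma$ of the one-object groupoid $\Gamma$.

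Finally I would check naturality and assemble. For $L\subseteq M$ the morphism $\Spec M\to\Spec L$ induces a map of \v{C}ech nerves whose effect on difference coordinates is the map $B\Gal(M/k)\to B\Gal(L/k)$ induced by restriction, so the levelwise isomorphisms are compatible with all transition maps. Assembling them over the cofinal Galois subsystem then produces the desired isomorphism $(\Spec k/k)_{\ret}\cong\{B\Gal(L/k)\}_L = BG$ in $\pro-\Sh$. I expect the main obstacle to be the identification in the second step: one must verify that the connected-component functor genuinely converts the \v{C}ech nerve into the bar construction, i.e. that $\pi$ commutes with the iterated fibre products defining $\cosk_0^{\Spec k}(\Spec L)$ and that the simplicial operators match on the nose after the change to difference coordinates. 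Everything rests on the descent isomorphism $L\otimes_k L\cong\prod_\Gamma L$; for non-Galois $L$ the components of $\Spec(L^{\otimes_k(n+1)})$ are more intricate, which is precisely why the reduction to the cofinal Galois subsystem in the first step is indispensable.
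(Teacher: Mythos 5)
Your proof is correct and follows essentially the same route as the paper: identify $RC(\Spec k/k)$ with the (co)directed system of finite subextensions of $\ok/k$, and for Galois $L/k$ identify $\pi(N_k(L))$ with $B\Gal(L/k)$ via the descent isomorphism $L\otimes_k L\cong\prod_{\Gal(L/k)}L$ and difference coordinates. The only difference is that you are slightly more careful than the paper, which asserts outright that every rigid cover of $\Spec k$ comes from a finite Galois extension, whereas you correctly pass to the cofinal subsystem of Galois subextensions to handle the non-Galois coverings.
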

\begin{proof}
Let $L/k$ be a finite Galois extension of $k$ contained in a fixed separable closure $\ok$. The associated \v{C}ech nerve $N_k(L)$ consists in degree $n$ of the fiber product over $\Spec k$ of $n+1$ copies of $\Spec L$. The set of connected components in each degree is hence just given by the product of $n$ copies of the finite Galois group $\Gal(L/k)$ of the extension $L/k$. Hence the simplicial set of connected components of $N_k(L)$ is naturally isomorphic to $B\Gal(L/k)$. Since every rigid cover defining $(\Spec k/k)_{\ret}$ is given by a finite Galois extension $L\subset \ok$, this proves the assertion.
\end{proof}

The following two examples of morphisms will be most important for us.

\begin{example}\label{structuremap}
Let $X$ be a geometrically connected variety over $k$. The map $p_{\ret}: (X/k)_{\ret} \to (\Spec k/k)_{\ret}$ induced by the structure map $p:X\to \Spec k$ has the following shape. As we have mentioned in the previous proof, a rigid cover of $\Spec k$ is given by a finite Galois extension $L/k$ inside the chosen algebraic closure $\ok$. The pullback functor $p^*: RC(k/k) \to RC(X/k)$ sends the finite Galois extension $L/k$ to the rigid cover $U_L \to X$ 
\[
(U_L \to X):= \coprod_{x\in X(\ok)} X_L,x_L \to X, x \in RC(X/k).
\] 
given by the disjoint union of the (connected) finite Galois covers $X_L=X\times_k L \to X$ indexed by the geometric points $x\in X(\ok)$.  
The component $X_L$ is equipped with the canonical lift $x_L$ of $x$ induced by the map $\Spec \ok \to \Spec L$. The canonical isomorphism $X_L\times_X X_L=X\times_k(L\times_k L)$ 
induces a functorial map of simplicial sets 
\[
\pi(N_X(U_L)) \to \pi(N_k(L)).
\]
This determines the strict map $p_{\ret}$ as an element in the set 
\[
\lim_{L/k} \Hom_{\Sh}(\pi(N_X(U_L)), \pi(N_k(L))). 
\] 
In particular, since $\pi(N_k(L))$ is isomorphic to $B\Gal(L/k)$, we see that each simplicial set $\pi(N_X(U_L))$ is equipped with a map to the classifying space $B\Gal(L/k)$ of the finite group $\Gal(L/k)$. Since $X$ is geometrically connected over $k$, this map induces a surjective homomorphism of fundamental groups. 
\end{example}

\begin{example}\label{0action}
Let $X$ be a geometrically connected variety over $k$. Every  element $g\in \Gal(\ok/k)$ defines a morphism $\Xok \to \Xok$ of $\Xok=X\otimes_k \ok$. The induced map $g_{\ret}:(\Xok/k)_{\ret} \to (\Xok/k)_{\ret}$ of rigid \'etale types is induced by the functor $g^*:RC(\Xok/k)\to RC(\Xok/k)$ sending the rigid cover  
\[
\coprod_{x\in X(\ok)} U_{x} \to \Xok
\]
to the rigid cover 
\[
\coprod_{x\in X(\ok)} (U_{g(x)}\times_{\Xok} \Xok)_{x} \to \Xok
\]
where $U_{g(x)}\times_{\Xok} \Xok$ is the fiber product of the diagram
\[
\xymatrix{
U_{g(x)}\times_{\Xok} \Xok \ar[r] \ar[d] & U_{g(x)} \ar[d]\\
\Xok \ar[r]_g & \Xok}
\]
and $(U_{g(x)}\times_{\Xok} \Xok)_{x}$ is the connected component containing $x$. 
Hence on $0$-simplices, the map $g_{\ret}$ is given by sending the connected component $(U_{g(x)}\times_{\Xok} \Xok)_{x}$ indexed by $x$ to the component $U_{g(x)}$ indexed by $g(x)$. 

We conclude that, after identifying the pro-set of $0$-simplices with the set of geometric points $X(\ok)$ over $\ok$, the map $g_{\ret}$ is just given by the natural action of $g$ on $X(\ok)$. Moreover, a $0$-simplex of $(\Xok/k)_{\ret}$ which is fixed under the action of all elements $g\in \Gal(\ok/k)$ must be indexed by a rational point of $X$. 
\end{example}

\subsection{Profinite models for \'etale types}\label{etaleprofinmodels}

Let $k$ be a field with algebraic closure $\ok$ and absolute Galois group $G:=\Gal(\ok/k)$. Let $X$ be a geometrically connected smooth variety over $k$. In the following we denote the rigid \v{C}ech type $(X/k)_{\ret}$ of $X$ over $k$ by $\Xh$ and write $I$ for the indexing category $RC(X/k)$, i.e. $\Xh=\{\Xh(i)\}_I$.  

Let $x:\Spec \ok \to X$ be a geometric point of $X$. This turns $\Xh$ into a pro-object of $\Shp$. By Lemma \ref{BGk}, we can identify pro-spaces over $(\Spec k/k)_{\ret}$ with pro-spaces over $BG$. Hence we can consider $\Xh$ as a pro-space over $BG$. By our assumption on $X$ and by \cite[Theorem 11.1]{artinmazur}, every $\Xh(i)$ is a pointed connected $\pi$-finite space in the sense of Definition \ref{pifinitedef}. Moreover, by Example \ref{structuremap}, we know that each $\Xh(i)$ is equipped with a map to the classifying space $B\Gamma$ for some finite quotient group $\Gamma$ of $G$. Since $X$ is a geometrically connected variety over $k$, we can assume that each of the maps $\Xh(i) \to B\Gamma$ induces a surjective homomorphism $\pi_1(\Xh(i)) \to \Gamma$.

Hence, as described in Section \ref{profinmodels}, we can associate to $\Xh$ a functorial profinite model $\Xhpf$ over $BG$. By Lemma \ref{profinhomotopygroups} and Proposition \ref{whe}, we obtain the following result. 

\begin{theorem}
For $k$ and $X$ as above, the fibrant profinite space $\Xhpf$ over $BG$ has the same homotopy type as the \'etale topological type of $X$, i.e. there is an isomorphism of profinite groups $\pi_n(\Xhpf) \cong \pi_n^{\et}(X)$ for all $n\geq 1$. 
\end{theorem}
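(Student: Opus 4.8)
The plan is to obtain the theorem directly from the two results cited immediately before it, Lemma \ref{profinhomotopygroups} and Proposition \ref{whe}, by matching up the homotopy groups on both sides through the intermediate pro-homotopy groups of $\Xh=(X/k)_{\ret}$.

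First I would identify the left-hand side. The profinite model is $\Xhpf=\holim_i F\Xh(i)$, and Lemma \ref{profinhomotopygroups} computes its homotopy groups: for each $n\ge 0$ there is a natural isomorphism of profinite groups $\pi_n(\Xhpf)\cong\{\pi_n(\Xh(i))\}_I$, i.e. $\pi_n(\Xhpf)$ is the limit $\lim_i\pi_n(\Xh(i))$ of the pro-system of finite homotopy groups of the terms of the rigid \v{C}ech type. The finiteness of each $\pi_n(\Xh(i))$ is exactly the $\pi$-finiteness established via \cite[Theorem 11.1]{artinmazur}, and it is what makes this limit a profinite group.

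Next I would identify the right-hand side. By definition the \'etale homotopy groups $\pi_n^{\et}(X)$ are the pro-homotopy groups of Friedlander's \'etale topological type $X_{\et}$; because $X$ is smooth and geometrically connected, \cite[Theorem 11.1]{artinmazur} again guarantees that this pro-system is pro-finite, so $\pi_n^{\et}(X)$ is the profinite group arising as the limit of the associated pro-system of finite groups. Proposition \ref{whe} then supplies a zig-zag of canonical weak equivalences in $\pro-\Sh$ between $\Xh$ and $X_{\et}$. Since weak equivalences in $\pro-\Sh$ are by definition the maps inducing isomorphisms on pro-homotopy groups, each map in the zig-zag induces such an isomorphism, and hence the pro-system $\{\pi_n(\Xh(i))\}_I$ and the corresponding pro-system for $X_{\et}$ are isomorphic as pro-objects in groups.

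It then remains to pass from this isomorphism of pro-homotopy groups to an isomorphism of the two limits, and this is the only delicate point. It is where finiteness is used: all the groups in sight are finite, so everything lives in the pro-category of finite groups, which is canonically equivalent to the category of profinite groups; under this equivalence an isomorphism of pro-objects yields an isomorphism of the corresponding profinite groups upon taking limits. Combining the three steps gives the natural chain of isomorphisms of profinite groups
\[
\pi_n(\Xhpf)\cong\lim_i\pi_n(\Xh(i))\cong\pi_n^{\et}(X)
\]
for every $n\ge 1$, which is the assertion. I do not expect any genuine obstacle beyond this bookkeeping, since both inputs are already available.
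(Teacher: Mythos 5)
Your proposal is correct and follows exactly the route the paper intends: the paper derives this theorem directly from Lemma \ref{profinhomotopygroups} and Proposition \ref{whe} together with the Artin--Mazur finiteness result, which is precisely the chain $\pi_n(\Xhpf)\cong\lim_i\pi_n(\Xh(i))\cong\pi_n^{\et}(X)$ you spell out. The only addition you make is to make explicit the passage from an isomorphism of pro-systems of finite groups to an isomorphism of their limits via the equivalence of pro-finite groups with profinite groups, which the paper leaves implicit.
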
 



\section{Rational points and Galois homotopy fixed points}

We can now give a new definition of continuous homotopy fixed points of a smooth variety over a field under the natural Galois action. A previous definition has been given in \cite{gspaces}. In \cite{hs}, Harpaz and Schlank provide a definition only for the set of connected components of a potential homotopy fixed point space. 

\subsection{Galois homotopy fixed point spaces}

Let $k$ be a field with algebraic closure $\ok$ and absolute Galois group $G:=\Gal(\ok/k)$. Let $X$ be a geometrically connected smooth variety over $k$ and $\Xok$ be its lift to $\ok$. We denote $(X/k)_{\ret}$ by $\Xh=\{\Xh(i)\}_I$ and write $\bXh=\{\bXh(\bi)\}_{\bI}$ for the rigid \v{C}ech type $(\Xok/k)_{\ret}$ of $\Xok$. The pro-space $\bXh$ is equipped with a natural action of $G$ of the form described in Section \ref{proaction}. 

Let $x:\Spec \ok \to X$ be any geometric point of $X$. It turns $\Xh$ into a pro-object in $\Shp$. In particular, we can form the profinite model $\Xhpf$ over $BG$ of $\Xh$ described in Section \ref{etaleprofinmodels}. Essentially the same proof as for \cite[Theorem 3.5]{gspaces} shows that the profinite model of $\Xh$ over $BG$ has the homotopy type of the $G$-homotopy orbits of $\bXh$. 

\begin{defn}
We define
\[
\bXh^{hG}: = \Xhpf^{hG} = \Map_{\hShp/BG}(BG, \Xhpf)
\] 
to be the continuous homotopy fixed point space of $\bXh=(\Xok/k)_{\ret}$. 
\end{defn}

The canonical morphism $\Xok \to X$ induces a morphism of pro-objects of pointed spaces $\bXh \to \Xh$. As explained in Section \ref{proaction}, this yields a canonical map
\begin{equation}\label{geometriceta}
\eta: \bXh^G \to \bXh^{hG}
\end{equation}
from the $G$-fixed points $\bXh^G=(\lim_{\bi} \bXh(\bi))^G$ to the continuous homotopy fixed points space of $\bXh$.

\begin{remark}
One should note that the action of $G$ on $(\Xok/k)_{\ret}$ is only defined on the whole pro-object and not on each space. Hence $(\Xok/k)_{\ret}$ is in general not a pro-object of simplicial $G$-sets. But after forming the mapping space, i.e. after taking the limit of the underlying filtered diagram, we obtain a simplicial object in the category of $G$-sets as described in Section \ref{proaction}.
\end{remark}

\subsection{Rational points and homotopy fixed points}

We keep the notations of the previous section. 
By functoriality of rigid \v{C}ech types, every rational point of $X$ induces a map of pro-spaces 
\[
(\Spec k/k)_{\ret} \to \Xh
\]
compatible with the induced structure map $\Xh \to (\Spec k/k)_{\ret}$. 
After taking profinite models, we get a well-defined map of sets
\begin{equation}\label{rationaltohompoints1}
X(k) \to \Hom_{\hHhp/BG}(BG, \Xhpf) \cong \pi_0(\Map_{\hShp/BG}(BG, \Xhpf)).
\end{equation}

Since the mapping space on the right of \eqref{rationaltohompoints1} is by definition the continuous homotopy fixed point space of $\Xhpf$ (which we also denote by $\bXh^{hG}$), we obtain a natural map of sets 
\[
X(k) \to \pi_0(\bXh^{hG}).
\]

Moreover, we know from Remark \ref{adv} that the pro-set of $0$-simplices of $\bXh$ is canonically isomorphic to the set $X(\ok)$ of $\ok$-valued geometric points of $X$. By Example \ref{0action}, the action of the absolute Galois group $G$ of $k$ on the pro-set of $0$-simplices of $\bXh$ is just given by the natural action of $G$ on $X(\ok)$. Since each fixed point under this $G$-action has to be indexed by a rational point, we see that the set of $G$-fixed points the $0$-simplices of $\bXh$ is a subset of the $k$-rational points $X(k)=X(\ok)^G$ of $X$. Hence we obtain a canonical surjective map of sets 
\[
X(k) \to \pi_0(\bXh^G).
\]

Together with the map $\eta$ we obtain a map of sets
\[
X(k) \to \pi_0(\bXh^G) \xrightarrow{\pi_0(\eta)} \pi_0(\bXh^{hG}).
\]

Hence, since the map $X(k) \to \pi_0(\bXh^G)$ is surjective, it is possible to detect rational points on the smooth $k$-variety $X$ by studying the map $\eta: \bXh^G \to \bXh^{hG}$ which we consider as a fixed points to homotopy fixed points map. 

\begin{remark}
It is important that we are able to consider {\it continuous} homotopy fixed points under the action of the profinite Galois group. One reason will be given in the final section where we will see that, for a suitable $X$, the set $\pi_0(\Map_{\hShp/BG}(BG, \Xhpf))$ is in bijection with the set of continuous sections of the short exact sequence \eqref{sesintro} of the introduction. Another reason is given by the following argument. The $E_2$-terms of a descent spectral sequence for Galois homotopy fixed points should be isomorphic to Galois cohomology and not to ordinary group cohomology. This is in fact the case for our definition of $\Xhpf^{hG}$. If we are given a rational point $x$ on $X$, there is a conditionally convergent spectral sequence of the form
\[
E_2^{s,t}=H^s(G;\pi_t(\bXh,x)) \Rightarrow \pi_{t-s}(\Xhpf^{hG})
\]
where $H^s(G;\pi_t(\bXh))$ denotes the continuous cohomology of $G$ with coefficients in the profinite $G$-module $\pi_t(\bXh,x)$ (respectively profinite $G$-set for $t=0$ and profinite $G$-group if $t=1$). A proof of this statement can be read off from the arguments given in the proofs of \cite[Theorem 2.16]{gspaces} and \cite[Theorem 3.17]{homfixedlt}.
\end{remark}

\subsection{The section conjecture as a homotopy limit problem}

Our main example of a case where this observation might be interesting is Grothendieck's section conjecture. 
Let $k$ be a field finitely generated over $\Q$ and $G=\Gal(\ok/k)$ its absolute Galois group. Let $X$ be a geometrically connected variety over $k$. For any given geometric point $x$ of $X$, there is a natural short exact sequence of \'etale fundamental groups 
\begin{equation}\label{ses}
1 \to \pi_1(\Xok,x) \to \pi_1(X,x) \to G \to 1.
\end{equation}

Let $a: \Spec k \to X$ be a rational point on $X$ and let $y: \Spec \ok \to X$ be a geometric point lying above $a$. Applying the functor $\pi_1(-, y)$ to the morphism $a$ induces a continuous homomorphism of groups  
\[
\sigma_a:G \to \pi_1(X, y).
\]
Since $X$ is geometrically connected, there is an \'etale path from $y$ to $x$ which induces an isomorphism $\lambda: \pi_1(X, y) \to \pi_1(X, x)$. Composing $\sigma_a$ with $\lambda$ defines a section 
\[
\lambda \circ \sigma_a:G \to \pi_1(X, x)
\]
of sequence (\ref{ses}). The choice of a different path from $y$ to $x$ changes this section by composition with an inner automorphism of $\pi_1(\Xok, x)$. Hence a rational point of $X$ induces a section of (\ref{ses}) which is well-defined up to conjugacy by an element of $\pi_1(\Xok, x)$. We denote the conjugacy class of the section induced by the rational point $a$ by $[\sigma_a]$ and denote the set of all $\pi_1(\Xok, x)$-conjugacy classes of sections of (\ref{ses}) by $S(\pi_1(X/k))$. With these notations, there is a map of sets 
\begin{equation}\label{scmap}
X(k) \to S(\pi_1(X/k)), ~ a \mapsto [\sigma_a].
\end{equation}

Grothendieck's section conjecture states that map \eqref{scmap} is a bijection if $X$ is a smooth projective curve of genus at least two. It is known that the map is injective. The harder and still open question is whether it is surjective.

We would like to shed some light on map \eqref{scmap} from an \'etale homotopy-theoretical point of view. The crucial and well-known observation is that $X$ is a $K(\pi,1)$-variety over $k$ (see for example \cite{stixbook}). Denoting the rigid \v{C}ech type $(X/k)_{\ret}$ of $X$ again by $\Xh=\{\Xh(i)\}_I$, we know that each $\Xh(i)$ is a pointed connected $\pi$-finite space whose only nontrivial homotopy group is the fundamental group $\pi_1(\Xh(i))$. The pro-system of these finite fundamental groups is just the profinite \'etale fundamental group $\pi_1(X)=\pi^{\et}_1(X, x)$ of $X$. Hence there is a weak equivalence of pro-spaces
\[
\Xh \simeq B\pi_1(X).
\]
Moreover, we can take $\holim_i B\pi_1(\Xh(i))$, or equivalently, as we explained in Remark \ref{remarkpi1profinitemodel}, $\lim_i B\pi_1(\Xh(i)) = B\pi_1(X)$, as a fibrant profinite model $\Xhpf$ of $\Xh$ in $\hShp/BG$.  



We denote the rigid \v{C}ech type $(\Xok/k)_{\ret}$ again by $\bXh$. Then we have the canonical map of sets 
\[
X(k) \to \pi_0(\bXh^G) \xrightarrow{\pi_0(\eta)} \pi_0(\bXh^{hG})
\]
described in the previous section. 

Furthermore, we deduce from Proposition \ref{profinsection} that there is a natural bijection of sets 
\[
\pi_0(\bXh^{hG}) \cong S(\pi_1(X/k)).
\]

It follows from this result that map (\ref{scmap}) is surjective if the map
\[
X(k)\to \pi_0(\bXh^{hG})
\]
is surjective. Since the map $X(k) \to \pi_0(\bXh^G)$ is surjective, we get the following criterion.

\begin{theorem}\label{mainthm1}
Let $k$ be a field which is finitely generated over $\Q$ and let $X$ be a smooth, projective curve of genus $g\geq 2$. Then the map (\ref{scmap}), $a \mapsto [\sigma_a]$, is surjective if the map 
\[
\pi_0(\bXh^G) \xrightarrow{\pi_0(\eta)} \pi_0(\bXh^{hG})
\]
is surjective. 
\end{theorem}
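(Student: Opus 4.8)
The plan is to assemble the surjectivity of \eqref{scmap} from three ingredients already established in the excerpt: the surjectivity of the fixed-point map $X(k) \to \pi_0(\bXh^G)$, the hypothesized surjectivity of $\pi_0(\eta)$, and the natural bijection $\pi_0(\bXh^{hG}) \cong S(\pi_1(X/k))$. First I would invoke the fact that $X$ is a $K(\pi,1)$-variety (see \cite{stixbook}) to conclude that each $\Xh(i)$ has $\pi_1(\Xh(i))$ as its only nontrivial homotopy group, so by Remark \ref{remarkpi1profinitemodel} we may take $\Xhpf = B\pi_1(X)$ as the fibrant profinite model of $\Xh$ over $BG$. This identifies $\bXh^{hG} = \Xhpf^{hG} = \Map_{\hShp/BG}(BG, B\pi_1(X))$, so that $\pi_0(\bXh^{hG}) = \pi_0(\Map_{\hShp/BG}(BG, B\pi_1(X)))$.

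Next I would apply Proposition \ref{profinsection} to the short exact sequence \eqref{ses}, with $\bpi = \pi_1(\Xok,x)$ and $\pi = \pi_1(X,x)$. The proposition gives a natural bijection
\[
\pi_0(\Map_{\hShp/BG}(B\pi_1(X), BG)) \cong S(\pi_1(X/k)),
\]
which, combined with the identification of the homotopy fixed point space, yields $\pi_0(\bXh^{hG}) \cong S(\pi_1(X/k))$. At this point the composite map
\[
X(k) \to \pi_0(\bXh^G) \xrightarrow{\pi_0(\eta)} \pi_0(\bXh^{hG}) \cong S(\pi_1(X/k))
\]
must be checked to coincide with the map $a \mapsto [\sigma_a]$ of \eqref{scmap}; this is a naturality verification tracing a rational point $a$ through the rigid \v{C}ech type construction, the induced section on $\pi_1$, and the universal property of classifying spaces used in Proposition \ref{profinsection}.

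Finally, the surjectivity argument is purely formal: the first map $X(k) \to \pi_0(\bXh^G)$ is surjective by the discussion preceding the theorem (every connected component of $\bXh^G$ is indexed, via Example \ref{0action}, by a $G$-fixed $0$-simplex which must be a rational point), and $\pi_0(\eta)$ is surjective by hypothesis. A composite of surjections is surjective, so $X(k) \to \pi_0(\bXh^{hG})$ is surjective, and under the bijection with $S(\pi_1(X/k))$ this is exactly the surjectivity of \eqref{scmap}.

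The main obstacle I anticipate is not the formal surjectivity chain but the compatibility check that the homotopy-theoretic composite genuinely recovers $a \mapsto [\sigma_a]$. One must verify that the section of \eqref{ses} produced by a rational point via the rigid \v{C}ech functoriality and the profinite model agrees, up to $\pi_1(\Xok,x)$-conjugacy, with the section $\lambda \circ \sigma_a$ defined group-theoretically; the role of the chosen geometric point $y$ above $a$ and the path $\lambda$ must be matched against the basepoint choices implicit in the construction of $\Xhpf$ and in Proposition \ref{profinsection}. Care is also needed because the $G$-action on $\bXh$ exists only on the whole pro-object, so the identification with $S(\pi_1(X/k))$ genuinely requires the \emph{continuity} built into the mapping space $\Map_{\hShp/BG}$, as emphasized in the remark following Proposition \ref{profinsection}.
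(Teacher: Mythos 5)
Your proposal follows essentially the same route as the paper: identify $\Xhpf$ with $B\pi_1(X)$ via the $K(\pi,1)$ property and Remark \ref{remarkpi1profinitemodel}, apply Proposition \ref{profinsection} to obtain $\pi_0(\bXh^{hG}) \cong S(\pi_1(X/k))$, and then compose the surjection $X(k)\to\pi_0(\bXh^G)$ with the hypothesized surjection $\pi_0(\eta)$. The compatibility check you flag --- that the homotopy-theoretic composite recovers $a\mapsto[\sigma_a]$ --- is indeed the only non-formal point, and the paper treats it at the same level of implicitness as you do, relying on the naturality of the bijection in Proposition \ref{profinsection}.
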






\begin{remark}\label{propversion}
The category $\hSh$ of profinite spaces has first been studied by Morel in \cite{ensprofin} where a model structure was constructed in which the weak equivalences are the maps that induce an isomorphism in continuous $\Z/p$-cohomology. Since it seems more likely that techniques from the proofs of the Sullivan conjecture (\cite{miller}, \cite{carlsson}, \cite{lannes}) can be translated first to the pro-$p$-case, one may consider it to be a more accessible problem to decide one of the following related questions. Is a $p$-completed version of $\eta$ a weak equivalence? Does $\eta$ induce an isomorphism on mod $p$-homology?
\end{remark}

\bibliographystyle{amsplain}
\end{document}